\newenvironment{proof}[1][Proof:]{\begin{trivlist} 
\item[\hskip \labelsep {\bfseries #1}]}{\end{trivlist}} 
\newcommand{\qed}{\nobreak \ifvmode \relax \else \ifdim\lastskip<1.5em \hskip-\lastskip \hskip1.5em plus0em minus0.5em \fi \nobreak \vrule height0.75em width0.5em depth0.25em\fi} 
\def\0{\bf \0}
\def\0{{\bf 0}}
\def\R{\mathbb{R}}
\def\Z{{\bf Z}}
\def\Tr{{\rm T}}
\newtheorem{example}{Example}[section]
\newtheorem{theorem}{Theorem}[section]
\newtheorem{lemma}{Lemma}[section]
\newtheorem{remark}{Remark}[section]
\begin{document}
\title{A note on the diameter of convex polytope}
\author{
Yaguang Yang\thanks{US NRC, Office of Research, 
11555 Rockville Pike, Rockville, 20850. 
Email: yaguang.yang@verizon.net.} 
}

\date{\today}

\maketitle    % This command generates the title.

\begin{abstract}
This short note extends a recent result (Bonifas et al,
On sub-determinants and the diameter of polyhedra, 
Discrete Computational Geometry, 52, 2014) of an upper 
bound of the diameter of a convex polytope defined 
by an integer matrix to a similar upper bound of the 
diameter of a convex polytope defined by a real matrix. 
It also shows, by an example, that the new bound
may be better than the ones of Bonifas et al.
\end{abstract}

{\bf Keywords:} diameter of convex polytope, linear programming.

%{\bf MSC classification:} 90C05 Linear programming.
%\newpage
 
\section{Introduction}

A polytope $P=\{ x\in \R^n: Ax \le b \}$ is defined
by a $m \times n$ matrix $A$, a vector $b$, and $m>n$. 
Let $x^* \in P$ denote a vertex of $P$ which satisfies 
(a) the system of inequalities $A x^* \le b$ holds and 
(b) $n$ equalities hold for some linear independent rows 
of $A$. Two vertices $x^*$ and $y^*$ are neighbours if 
they are connected by an edge of $P$, which is defined by
$n-1$ linearly independent rows of $A$ where
the equalities hold for both $x^*$ and $y^*$. In 
this way, any two vertices on $P$ are connected
by a path composed of a series of edges.
The diameter of $P$ is the integer that is the smallest
number of edges between any two vertices on $P$, which
defines the shortest path between $x^*$ and $y^*$.

The famous Hirsch conjecture (see \cite{dantzig63})
states that for $m>n \ge 2$, diameter of $P$ is less than 
$m-n$. After 50 years of extensive research, this 
conjecture was disproved by Santos \cite{santos12}.
But the interest on the bound of the diameter of 
polytope is not reduced because this problem is not
only hard but also has theoretical implication to the 
simplex method of the linear programming 
\cite{blf17,dm16,dp18,sukegawa17}. Recently,
Bonifas et al. \cite{bsehn14} derived an upper bound for
a polytope with total unimodularity,\footnote{
Although Bonifas et al. assumed that $A \in \Z^{m \times n}$,
their results are applicable to more general settings as
we will see in the derivations of this note.} i.e., for 
$A \in \Z^{m \times n}$, the upper bound is given as
$\mathcal{O} \left( n^{3.5} \Delta^2 \ln(n\Delta) \right)$, 
where $\Delta$ is the largest absolute value among all 
$(n-1) \times (n-1)$ sub-determinants of $A$. 
This short note shows that their method can be extended 
to the case where $A \in \R^{m \times n}$. Moreover, 
if $A \in \Z^{m \times n}$, it also shows, by an example,
that the new bound may be better than the bound of 
\cite{bsehn14}. We would also like to point out that
parameters other than m and n 
(for example, smoothness parameters)  in iteration bound 
for simplex method have been studied \cite{dh17}.

Without loss of generality, we may assume that the
lengths of all row vectors of $A$ are one, which can easily 
be achieved by normalizing the row $A_i$, the $i$th
row of $A$, and dividing $b_i$ by $\| A_i \|$ for all $i$. 
This does not change the graph of the polytope $P$.

\section{Main results}

We follow the notations and definitions of Bonifas 
et al. \cite{bsehn14}. First, assume that $P$ is
non-degenerate, i.e., each vertex has exactly $n$
tight inequalities. % (in other word, $n$ facets). 
Let $V$ be the set of all vertices of $P$.
The normal cone $C_v$ of a vertex $v$ is the set 
of all vectors $c \in \R^n$ such that $v \in V$ 
is an optimal solution of the linear programming
$\max \{ c^{\Tr} x: x\in \R^n, Ax \le b \}$. Two 
vertices $u$ and $v$ are adjacent if and only if
$C_u$ and $C_v$ share a facet. Let the unit ball
\[
B_n= \{ x \in \R^n: \| x \|_2 \le 1 \}.
\] 
The volume of the union of the normal cones of 
$U \subseteq V$ is defined as 
\[
vol (S_U)= vol \left( \cup_{v \in U} C_v \cap B_n \right),
\]
where $S_v=C_v \cap B_n$ is defined as the sphere cone
of $C_v$. 

For any two vertices $u$ and $v$ in $P$, starting from 
$u$ and $v$, the breadth-first-search finds all the
neighbour vertices by iteration until a common vertex
is discovered. The shortest path is no more than two
times the number of iterations. Let $I_j \subseteq V$ be the 
set of vertices that have been discovered in $j$th 
iteration. Clearly, if 
\begin{equation}
vol(S_{I_j}) \ge \frac{1}{2}vol(B_n),
\label{basic}
\end{equation}
then, the common vertex must be found in less than
$j$ iterations, i.e., the diameter is bounded by $2j$.
The rest effort is to estimate $j$ such that equation
(\ref{basic}) holds.

The $(n-1)$-dimensional surface of a spherical
cone $S$ that is not on the sphere is denoted as
the dockable surface $D(S)$. 
%The base of $S$
%is the intersection of the sphere of $S$ with the
%unit sphere. The area of the base of $S$ is denoted
%as $B(S)$ and the length of the relative boundary of
%$S$ is defined by $L(S)$. The following relations are 
%well-known:
%\[
%vol(S)=\frac{B(S)}{n}, \hspace{0.1in}
%D(S)=\frac{L(S)}{n-1}.
%\]
Bonifas et al. showed the following:
\begin{lemma}[Bonifas et al. \cite{bsehn14}]
Let $S$ be a (not necessarily convex) spherical cone with
$vol(S) \le \frac{1}{2} vol (B_n)$. Then,
\begin{equation}
\frac{D(S)}{vol(S)} \ge \sqrt{\frac{2n}{\pi}}.
\end{equation}
\label{lemma1}
\end{lemma}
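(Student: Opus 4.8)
The plan is to reduce the statement to the classical isoperimetric inequality on the sphere $S^{n-1}$ by passing to spherical coordinates. Write the spherical cone as $S = C \cap B_n$ for a cone $C$ with apex at the origin, let $\Omega = C \cap S^{n-1}$ be its trace on the unit sphere with $(n-1)$-dimensional area $\sigma = \mathrm{area}(\Omega)$, and let $\partial\Omega \subset S^{n-1}$ be its relative boundary with $(n-2)$-dimensional measure $\tau$. Integrating the radial variable first, I would compute
\[
vol(S) = \int_\Omega\!\int_0^1 r^{n-1}\,dr\,d\omega = \frac{\sigma}{n},
\]
and, recognizing the dockable surface $D(S)$ as the cone over $\partial\Omega$ truncated at radius one (whose $(n-1)$-dimensional area element is $r^{n-2}\,dr\,d\tau$),
\[
D(S) = \int_{\partial\Omega}\!\int_0^1 r^{n-2}\,dr\,d\tau = \frac{\tau}{n-1}.
\]
Dividing yields the clean identity $\frac{D(S)}{vol(S)} = \frac{n}{n-1}\cdot\frac{\tau}{\sigma}$, so the entire problem collapses to bounding the perimeter-to-area ratio $\tau/\sigma$ of a region on the sphere from below.

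At this point I would invoke the spherical isoperimetric inequality: among all measurable regions of prescribed area $\sigma$ on $S^{n-1}$, the geodesic cap minimizes the boundary measure $\tau$. Since the hypothesis $vol(S) \le \tfrac12 vol(B_n)$ is equivalent to $\sigma \le \tfrac12\,\mathrm{area}(S^{n-1})$, i.e. $\Omega$ covers at most a hemisphere, it suffices to establish the bound for geodesic caps of angular radius $\theta \in (0,\pi/2]$. Parametrizing a cap by $\theta$ gives $\sigma(\theta) = A\int_0^\theta \sin^{n-2}\phi\,d\phi$ and $\tau(\theta) = A\sin^{n-2}\theta$, where $A = \mathrm{area}(S^{n-2})$, so that
\[
\frac{\tau(\theta)}{\sigma(\theta)} = \frac{\sin^{n-2}\theta}{\int_0^\theta \sin^{n-2}\phi\,d\phi}.
\]
Differentiating, the sign of the derivative is governed by $(n-2)\cos\theta\int_0^\theta\sin^{n-2}\phi\,d\phi - \sin^{n-1}\theta$; this is negative at $\theta = \pi/2$ (where the first term vanishes) while the ratio blows up like $(n-1)/\theta$ as $\theta\to 0$, so I expect $\tau/\sigma$ to be decreasing on $(0,\pi/2]$ and therefore minimized at the hemisphere $\theta = \pi/2$.

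It then remains to evaluate the bound at the hemisphere, which is exactly the half-ball: there $D(S)$ is the equatorial disk $B_{n-1}$ and $vol(S) = \tfrac12 vol(B_n)$, giving
\[
\frac{D(S)}{vol(S)} = \frac{2\,vol(B_{n-1})}{vol(B_n)} = \frac{2}{\sqrt{\pi}}\cdot\frac{\Gamma(\tfrac{n}{2}+1)}{\Gamma(\tfrac{n+1}{2})}.
\]
Applying Gautschi's inequality $\Gamma(x+1)/\Gamma(x+\tfrac12) \ge \sqrt{x}$ with $x = n/2$ then yields $\frac{D(S)}{vol(S)} \ge \frac{2}{\sqrt{\pi}}\sqrt{n/2} = \sqrt{2n/\pi}$, which is the claim, and this also shows that the half-ball is the extremal configuration.

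The main obstacle is the spherical isoperimetric inequality itself, which is a substantial theorem; the remaining work — the two spherical-coordinate integrals, the monotonicity of $\tau/\sigma$ in $\theta$, and the Gamma-function estimate — is routine. A secondary point requiring care is that $S$ need not be convex, so I must ensure the isoperimetric inequality and the boundary-measure computation are applied to the possibly irregular region $\Omega$ and its relative boundary $\partial\Omega$, rather than implicitly assuming a cap-shaped set.
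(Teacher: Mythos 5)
This lemma is not proved in the paper at all: it is imported verbatim from Bonifas et al.\ \cite{bsehn14} (``Bonifas et.\ al.\ showed the following''), so the relevant comparison is with the source, and your argument is essentially the proof given there. The route --- integrate out the radial variable to get $vol(S)=\sigma/n$ and $D(S)=\tau/(n-1)$, invoke the spherical (L\'evy--Schmidt) isoperimetric inequality to replace $\Omega$ by a geodesic cap of the same area, push the cap to the hemisphere, and bound the half-ball value $2\,vol(B_{n-1})/vol(B_n)$ below by $\sqrt{2n/\pi}$ via a Gamma-ratio estimate --- is exactly how the constant $\sqrt{2n/\pi}$ arises in the original. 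Your individual computations check out: the two coarea integrals are right, the hypothesis $vol(S)\le\frac{1}{2}vol(B_n)$ is indeed equivalent to $\sigma\le\frac{1}{2}\mathrm{area}(S^{n-1})$ because both scale by $1/n$, the hemisphere's dockable surface is the equatorial disk, and Gautschi's inequality with $x=n/2$ gives $\frac{2}{\sqrt{\pi}}\,\Gamma(\frac{n}{2}+1)/\Gamma(\frac{n+1}{2})\ge\sqrt{2n/\pi}$. Your closing caution about non-convexity is also handled correctly: in the application $\Omega$ is a finite union of spherical polytopes, so $\partial\Omega$ is rectifiable and both the lateral-surface computation and the isoperimetric inequality (which holds for arbitrary sets of finite perimeter on the sphere) apply.

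The one step you assert rather than prove is the monotonicity of $\tau(\theta)/\sigma(\theta)$ on $(0,\pi/2]$: checking that the derivative is negative at $\theta=\pi/2$ and that the ratio blows up as $\theta\to 0$ does not exclude interior oscillation, and you explicitly say ``I expect.'' The gap is real but closes in two lines. With $F(\theta)=\int_0^\theta\sin^{n-2}\phi\,d\phi$, your governing expression is $g(\theta)=(n-2)\cos\theta\,F(\theta)-\sin^{n-1}\theta$, and
\begin{equation}
g'(\theta)=-(n-2)\sin\theta\,F(\theta)-\sin^{n-2}\theta\cos\theta<0
\quad\text{on } (0,\pi/2),
\end{equation}
since after differentiating, the terms $(n-2)\cos\theta\sin^{n-2}\theta$ and $-(n-1)\sin^{n-2}\theta\cos\theta$ partially cancel. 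Together with $g(0)=0$ this gives $g<0$ on $(0,\pi/2]$, so the ratio is strictly decreasing and its minimum over admissible caps is at the hemisphere, as you claimed (note you only need this one-sided comparison, not full monotonicity). With that repair the proof is complete and, up to presentation, coincides with the argument in the cited source.
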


\begin{figure}[htb]
\centerline{\includegraphics[height=6cm,width=6.5cm]
{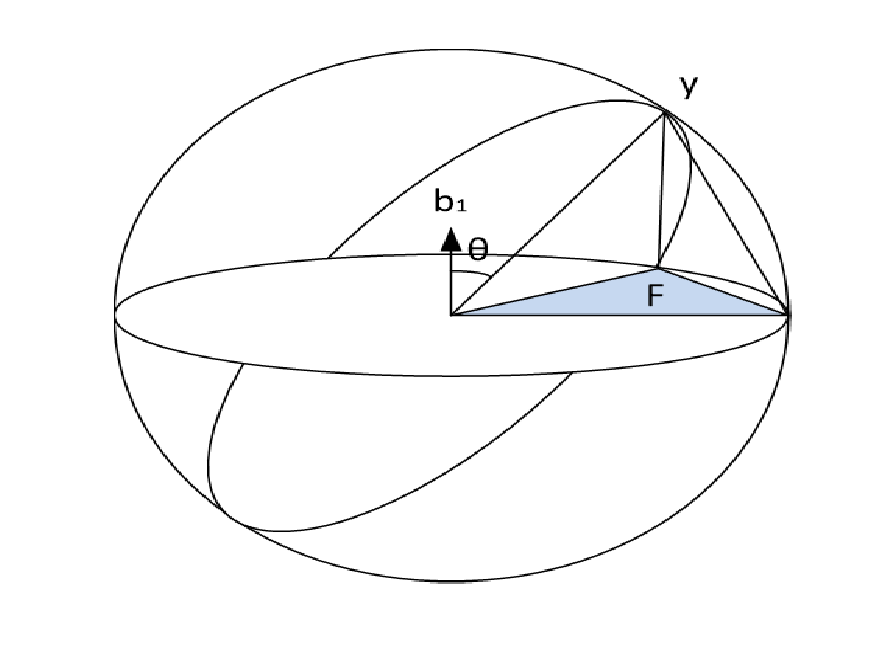}}
\caption{Proof of Lemma 2.2.}
\label{Lemma2}
\end{figure}

Let $\Delta$ denote the largest absolute value among all 
$(n-1) \times (n-1)$ sub-determinants of $A$ and $A_v$ 
be a $n \times n$ matrix of $A$ corresponding to a
vertex $v \in V$, i.e., there is a $x$ satisfying
$Ax \le b$ and $A_v x = b_v$ where $b_v$ is 
a sub-vector of $b$ whose index set is the same 
as $A_v$. Denote $\det(A^*)=\min_{v \in V} \det(A_v)$,
where $\det(A_v)$ is the volume of the box spanned by the 
(unit length) row vectors of $A_v$. $\det(A^*)$
can be viewed as the condition number of polytope
\cite{yang89}. The next lemma is a modification of 
Lemma 3 of Bonifas et al. \cite{bsehn14}.
\begin{lemma}
Let $v$ be a vertex of $P$. Then, one has,
\begin{equation}
\frac{D(S_v)}{vol(S_v)} \le 
\frac{n^{2.5} \Delta }{\det(A^*)}.
\end{equation}
\label{lemma2}
\end{lemma}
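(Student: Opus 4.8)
The plan is to exploit the simplicial structure of the normal cone. Since $P$ is non-degenerate, $C_v$ is generated by the $n$ linearly independent unit rows $a_1,\dots,a_n$ of $A_v$, so $S_v=C_v\cap B_n$ is a simplicial spherical cone whose $n$ flat facets are the lower-dimensional cones $F_j=\mathrm{cone}(\{a_i:i\neq j\})$ intersected with $B_n$. Hence the dockable surface decomposes as $D(S_v)=\sum_{j=1}^n vol(F_j\cap B_n)$, and it suffices to bound each facet contribution against $vol(S_v)$ and then sum.

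First I would bound a single facet from above by a pyramid argument. Let $H_j=\mathrm{span}\{a_i:i\neq j\}$ be the hyperplane containing $F_j$, and let $h_j$ be the distance from the remaining generator $a_j$ to $H_j$. Form the pyramid $Q_j=\mathrm{conv}\big(\{a_j\}\cup(F_j\cap B_n)\big)$ with apex $a_j$ and base $F_j\cap B_n$. Because $a_j\in C_v$, $F_j\cap B_n\subseteq C_v$, and $C_v$ is convex, $Q_j\subseteq C_v$; because $a_j$ and every base point lie in the convex body $B_n$, also $Q_j\subseteq B_n$. Therefore $Q_j\subseteq S_v$, and since a pyramid has volume $\tfrac1n(\text{base area})\times(\text{height})$,
\[
vol(S_v)\ \ge\ vol(Q_j)\ =\ \frac{h_j}{n}\,vol(F_j\cap B_n),
\]
which rearranges to $vol(F_j\cap B_n)\le n\,vol(S_v)/h_j$. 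Summing over $j$ gives the clean intermediate estimate $D(S_v)/vol(S_v)\le n\sum_{j=1}^n 1/h_j$.

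Next I would turn the heights into determinants. Writing $V_j$ for the $(n-1)$-dimensional volume of the parallelepiped spanned by $\{a_i:i\neq j\}$, the volume of the full parallelepiped factors as $|\det A_v|=V_j\,h_j$, so $1/h_j=V_j/|\det A_v|$. By the Cauchy--Binet formula, $V_j^2$ equals the sum of the squares of the $n$ maximal minors obtained by deleting one column, each of which is an $(n-1)\times(n-1)$ subdeterminant of $A$ and hence at most $\Delta$ in absolute value; thus $V_j\le\sqrt{n}\,\Delta$. Combining with $|\det A_v|\ge\det(A^*)$ yields $1/h_j\le \sqrt{n}\,\Delta/\det(A^*)$ for every $j$, and substituting into the intermediate estimate gives
\[
\frac{D(S_v)}{vol(S_v)}\ \le\ n\sum_{j=1}^n\frac{1}{h_j}\ \le\ n\cdot n\cdot\frac{\sqrt{n}\,\Delta}{\det(A^*)}\ =\ \frac{n^{2.5}\Delta}{\det(A^*)},
\]
as claimed.

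The main obstacle I anticipate is the per-facet pyramid step: one must choose the comparison body so that it is simultaneously inscribed in $S_v$ (forcing the lower bound on $vol(S_v)$) and has volume exactly proportional to the facet area times the geometrically meaningful height $h_j$. The convexity verification $Q_j\subseteq B_n$ and the identification of $h_j$ as the ratio $|\det A_v|/V_j$ are the crux; once these are in place, the passage to $\Delta$ via Cauchy--Binet and the bound $|\det A_v|\ge\det(A^*)$ are routine. This is the geometry suggested by Figure~\ref{Lemma2}.
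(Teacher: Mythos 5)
Your proposal is correct and takes essentially the same route as the paper: the identical pyramid argument giving $vol(S_v) \ge \frac{1}{n}\, area(F)\, h_F$ and hence $D(S_v)/vol(S_v) \le n \sum_F 1/h_F$, followed by the identical height estimate $h_F = \det(A_v)/V_F \ge \det(A^*)/(\sqrt{n}\,\Delta)$. The only cosmetic difference is that you bound the base volume $V_j$ via Cauchy--Binet, whereas the paper expresses the same quantity as the norm of a column of the adjugate of $A_v$, whose entries are precisely the $(n-1)\times(n-1)$ subdeterminants appearing in your minor expansion.
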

\begin{proof}
The proof uses the same idea of Bonifas et al. 
\cite{bsehn14} for the case of $A \in \R^{m \times n}$.
Let $F$ be a facet of a spherical cone $S_v$. Let $y$ be
the vertex of $S_v$ not contained in the $(n-1)$
dimensional facet $F$. Let $Q$ be the
convex hull of $F$ and $y$. We have $Q \subseteq S_v$ 
because $S_v$ is convex. Let $h_F$ be the Euclidean
distance of $y$ from the hyperplane containing $F$,
we have 
\[
vol(S_v) \ge vol(Q) = \frac{area(F) \cdot h_F}{n}.
\]
This yields
\begin{equation}
\frac{D(S_v)}{vol(S_v)}
=\sum_{\mbox{\footnotesize{facet} }F}  \frac{area(F)}{vol(S_v)}
\le n \sum_{\mbox{\footnotesize{facet} }F} \frac{1}{h_F}.
\label{key1}
\end{equation}
Let $a_1, a_2, \ldots, a_n$ be the row vectors of $A_v$,
and $b_1, b_2, \ldots, b_n$ be the column vectors of the
adjugate of $A_v$. Clearly $A_v [b_1, b_2, \ldots, b_n]
=\det(A_v) I$, where $I$ is an identity matrix. This means
that $a_1^{\Tr}b_1=\det(A_v)$ and $b_1 \perp 
\{ a_2, \ldots, a_n \}$. Without loss of generality,
assuming that $y$ lies on the ray generated by $a_1$
(actually $y=a_1$ because we assumed that the lengths
of rows of $A$ are one), clearly, $h_F$ is length of the projection
of $a_1$ onto $b_1$. Let $\theta$ be the angle described
in Figure 1, noticing that the absolute value of 
each component of $b_1$ is less than or equal to $\Delta$,
we have
\[
h_F = \| a_1 \| \cos(\theta) = \| a_1 \|
\frac{a_1^{\Tr}b_1}{\| a_1 \|\cdot \| b_1 \|}
=\frac{a_1^{\Tr}b_1}{\| b_1 \|}
=\frac{\det(A_v)}{\| b_1 \|}
\ge \frac{\det(A^*)}{ \sqrt{n} \Delta }.
\]
Substituting this into (\ref{key1}) completes the proof.
\hfill \qed
\end{proof}

The aforementioned two lemmas lead to the following
claim.

\begin{lemma}
Let $P=\{ x\in \R^{n}: Ax \le b \}$ be a general 
polytope with $A \in \R^{ m \times n}$ and $m>n \ge 2$. 
Assume that all $(n-1) \times (n-1)$ sub-determinants of
$A$ are bounded above by $\Delta$ and $\det(A_v)$ are 
bounded below by $\det(A^*)$. Let $I_j \subseteq V$ be
a set of vertices with $vol (I_j) \le \frac{1}{2} vol(B_n)$.
Then the volume of the neighbourhood of $I_j$, denoted
by $vol(S_{N({I_j})})$, satisfies
\begin{equation}
vol(S_{N({I_j})}) \ge  
\sqrt{\frac{2}{\pi}} \frac{\det(A^*)}{n^2 \Delta }
 \cdot  vol(S_{I_j}).
\label{key2}
\end{equation}
\label{lemma3}
\end{lemma}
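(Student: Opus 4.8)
The plan is to pin down the dockable surface area $D(S_{I_j})$ of the union cone between two estimates and then compare them: a lower bound coming from the surface-to-volume inequality of Lemma~\ref{lemma1}, and an upper bound obtained by charging each piece of that dockable surface to a neighbouring vertex and invoking Lemma~\ref{lemma2}. Dividing the resulting chain of inequalities will produce the claimed bound directly, with the stated constant falling out of the arithmetic.

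First I would set up the lower bound. The set $S_{I_j}=\cup_{v\in I_j}\,C_v\cap B_n$ is a (generally non-convex) spherical cone, being the intersection of $B_n$ with the cone $\cup_{v\in I_j}C_v$. Since by hypothesis $vol(S_{I_j})\le\frac12\,vol(B_n)$, Lemma~\ref{lemma1} applies to $S=S_{I_j}$ and gives
\begin{equation}
D(S_{I_j})\ \ge\ \sqrt{\frac{2n}{\pi}}\,vol(S_{I_j}).
\label{plan-lower}
\end{equation}

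The heart of the argument, and the step I expect to be the main obstacle, is the upper bound, because it is geometric and combinatorial rather than a calculation. The dockable surface $D(S_{I_j})$ is made up of the flat $(n-1)$-dimensional facets on the boundary of the union that do not lie on the sphere. Any such facet separates a cone $C_v$ with $v\in I_j$ from an adjacent cone $C_w$ with $w\notin I_j$; by the adjacency characterisation of the normal cones, $C_v$ and $C_w$ share precisely this facet, so it is one of the flat facets of $S_w$ and therefore part of $D(S_w)$ for some $w\in N(I_j)$. Facets lying between two cones both contained in $I_j$ are interior to the union and contribute nothing to $D(S_{I_j})$. Summing the facet areas, and using that $D(S_w)$ accounts for \emph{all} flat facets of $S_w$ (hence at least those bordering $I_j$), gives $D(S_{I_j})\le\sum_{w\in N(I_j)}D(S_w)$. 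Applying Lemma~\ref{lemma2} neighbour by neighbour, $D(S_w)\le\frac{n^{2.5}\Delta}{\det(A^*)}\,vol(S_w)$, and noting that distinct normal cones have disjoint interiors so that $\sum_{w\in N(I_j)}vol(S_w)=vol(S_{N(I_j)})$, I obtain
\begin{equation}
D(S_{I_j})\ \le\ \frac{n^{2.5}\Delta}{\det(A^*)}\,vol(S_{N(I_j)}).
\label{plan-upper}
\end{equation}

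Finally I would chain (\ref{plan-lower}) and (\ref{plan-upper}) to get $\sqrt{2n/\pi}\,vol(S_{I_j})\le\frac{n^{2.5}\Delta}{\det(A^*)}\,vol(S_{N(I_j)})$ and solve for $vol(S_{N(I_j)})$. Since $\sqrt{2n/\pi}\,/\,n^{2.5}=\sqrt{2/\pi}\,/\,n^{2}$, this is exactly inequality (\ref{key2}). The only point requiring genuine care is the covering step: one must check that every dockable facet of the union is honestly a facet of a single outside neighbour's cone, so that no boundary area is dropped, while confirming that the surplus flat facets of the $S_w$ not adjacent to $I_j$ only strengthen the upper bound and so cause no harm.
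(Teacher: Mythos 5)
Your proposal is correct and follows essentially the same route as the paper: a lower bound on $D(S_{I_j})$ from Lemma~\ref{lemma1}, an upper bound $D(S_{I_j})\le\sum_{w\in N(I_j)}D(S_w)\le\frac{n^{2.5}\Delta}{\det(A^*)}\,vol(S_{N(I_j)})$ via Lemma~\ref{lemma2}, and then chaining the two. Your careful justification of the covering step (each dockable facet of the union being a facet of some neighbour's cone) is simply the detail the paper compresses into the phrase ``$D(S_{I_j})$ is part of $\sum_{v\in N(I_j)}D(S_v)$.''
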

\begin{proof}
Noticing that $D(S_{I_j})$ is part of 
$\sum_{v \in N(I_j)} D(S_v)$ and using Lemma \ref{lemma1},
we have
\begin{equation}
\sum_{v \in N(I_j)} D(S_v) \ge D(S_{I_j})
\ge  \sqrt{\frac{2n}{\pi}} \cdot  {vol(S_{I_j})}.
\end{equation}
Applying Lemma \ref{lemma2}, we have
\begin{equation}
\sum_{v \in N(I_j)} D(S_v) \le
\frac{n^{2.5} \Delta }{\det(A^*)} 
 \sum_{v \in N(I_j)} vol(S_v) 
=\frac{n^{2.5} \Delta }{\det(A^*)} \cdot vol(S_{N(I_j)}).
\end{equation}
Combining these two inequality gives
\begin{equation}
vol(S_{N(I_j)}) \ge \frac{\det(A^*)}{n^{2.5} \Delta }
\sqrt{\frac{2n}{\pi}}  \cdot {vol(S_{I_j})}
= \sqrt{\frac{2}{\pi}} \frac{\det(A^*)}{n^2 \Delta }
 \cdot  {vol(S_{I_j})}
\end{equation}
This completes the proof.
\hfill \qed
\end{proof}

The main result of this short note follows from Lemma 
\ref{lemma3}.

\begin{theorem}
Let $P=\{ x\in \R^{n}: Ax \le b \}$ be a general 
polytope with $A \in \R^{ m \times n}$ and $m>n \ge 2$. 
Assume that all $(n-1) \times (n-1)$ sub-determinants of
$A$ are bounded above by $\Delta$ and $\det(A_v)$ are 
bounded below by $\det(A^*)$. Then, the diameter of 
the polytope $P$ is bounded by
$\mathcal{O} \left( {n^3 \ln (n) \Delta} \right)$ if 
$\det(A^*) \ge \frac{1}{2}$; for $\det(A^*) < \frac{1}{2}$, 
the bound is given by 
$j= \mathcal{O} \left( \frac{n^3 \Delta }{\det(A^*)} 
\ln  \left( \frac{n}{\det(A^*)} \right) \right)$.
\label{main}
\end{theorem}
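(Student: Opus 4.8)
The plan is to run a two-sided breadth-first search, growing BFS trees simultaneously from the two given vertices $u$ and $v$, and to argue via Lemma~\ref{lemma3} that each tree fills up a geometrically increasing fraction of $B_n$. First I would record the elementary but crucial packing fact behind (\ref{basic}): the sphere cones $\{S_w\}_{w\in V}$ tile $B_n$, so $\sum_{w\in V} vol(S_w)=vol(B_n)$; consequently, if the tree grown from $u$ and the tree grown from $v$ each attain sphere-cone volume at least $\tfrac12 vol(B_n)$ within $j$ iterations, the two discovered vertex sets cannot be disjoint and a common vertex has been reached. Since the shortest $u$–$v$ path is at most twice the number of iterations, it then remains only to bound the number $j$ of iterations a single tree needs in order to reach half the volume of $B_n$.

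Next I would iterate Lemma~\ref{lemma3}. Writing $I_{j+1}=I_j\cup N(I_j)$ and using that distinct sphere cones have disjoint interiors, one has the exact splitting $vol(S_{I_{j+1}})=vol(S_{I_j})+vol(S_{N(I_j)})$, so as long as $vol(S_{I_j})\le\tfrac12 vol(B_n)$ the inequality (\ref{key2}) gives
\[
vol(S_{I_{j+1}})\ \ge\ (1+\rho)\,vol(S_{I_j}),\qquad \rho:=\sqrt{\tfrac{2}{\pi}}\,\frac{\det(A^*)}{n^{2}\Delta}.
\]
Unrolling this from the singleton $I_0=\{u\}$ yields $vol(S_{I_j})\ge(1+\rho)^{j}vol(S_u)$, and solving $(1+\rho)^{j}vol(S_u)\ge\tfrac12 vol(B_n)$ together with the elementary bound $\ln(1+\rho)\ge\rho/2$ valid for $0<\rho\le1$ shows that
\[
j\ \le\ \frac{2}{\rho}\,\ln\!\frac{vol(B_n)}{2\,vol(S_u)}\ =\ \sqrt{2\pi}\,\frac{n^{2}\Delta}{\det(A^*)}\,\ln\!\frac{vol(B_n)}{2\,vol(S_u)}
\]
iterations suffice, so that the diameter is at most $2j$.

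The last and hardest step is to control the logarithmic factor, i.e.\ to bound the smallest sphere-cone volume $\min_{w\in V}vol(S_w)$ from below so that $\ln\!\big(vol(B_n)/vol(S_u)\big)=\mathcal{O}(n)$. I would do this by parametrising the normal cone $C_w={\rm cone}(a_1,\dots,a_n)$ through its unit generators: the linear map $\lambda\mapsto A_w^{\Tr}\lambda$ has Jacobian $\det(A_w)$ and carries the nonnegative orthant onto $C_w$, whence
\[
vol(S_w)=\det(A_w)\cdot vol\{\lambda\ge0:\lambda^{\Tr}G\lambda\le1\},\qquad G:=A_wA_w^{\Tr}.
\]
Since $G$ has unit diagonal, its largest eigenvalue is at most ${\rm tr}(G)=n$, so $\{\lambda\ge0:\|\lambda\|\le n^{-1/2}\}$ lies inside the region above and
\[
vol(S_w)\ \ge\ \det(A^*)\,\frac{vol(B_n)}{2^{n}n^{n/2}},
\]
giving $\ln\!\big(vol(B_n)/vol(S_u)\big)\le n\ln2+\tfrac{n}{2}\ln n+\ln\!\big(1/\det(A^*)\big)$.

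Feeding this into the bound on $j$ produces a diameter estimate of order $n^{3}\Delta/\det(A^*)$ up to the single logarithmic factor $\tfrac{n}{2}\ln n$. I expect the main difficulty to lie precisely here, in sharpening the smallest-cone estimate so that the logarithm is genuinely $\mathcal{O}(n)$: the crude trace bound $\lambda_{\max}(G)\le n$ loses a factor $n^{n/2}$ inside the logarithm, and replacing it by a direct estimate of the orthant fraction of the ellipsoid $\{\lambda^{\Tr}G\lambda\le1\}$ (which is governed by $\det(A_w)\ge\det(A^*)$ rather than by $\lambda_{\max}$) should remove the spurious $\tfrac{n}{2}\ln n$ and recover the clean $\mathcal{O}\!\left(\frac{n^{3}\Delta}{\det(A^*)}\right)$ bound asserted in the statement.
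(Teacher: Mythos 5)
Your architecture is exactly the paper's: breadth-first search, the tiling of $B_n$ by sphere cones, the additivity $vol(S_{I_{j+1}})=vol(S_{I_j})+vol(S_{N(I_j)})$, geometric growth at rate $1+\rho$ from Lemma \ref{lemma3}, and $\ln(1+\rho)\ge\rho/2$. The one genuine gap is the step you yourself flag: the lower bound on the smallest sphere-cone volume. The paper's device, which you are missing, is an inscribed simplex rather than an inscribed spherical orthant: $S_v$ contains $J_n=\mathrm{conv}\{0,a_1,\dots,a_n\}$, whose volume is $\det(A_v)/n!\ge \det(A^*)/n!$ (citing Stein). In fact this drops out of your own parametrization in one line: for $\lambda\ge 0$ with $\sum_i\lambda_i\le 1$ one has $\|A_v^{\Tr}\lambda\|\le\sum_i\lambda_i\|a_i\|\le 1$ because the rows are normalized to unit length, so the standard simplex of volume $1/n!$ sits inside $\{\lambda\ge 0:\lambda^{\Tr}G\lambda\le 1\}$ --- no eigenvalue estimate needed, and $1/n!$ is asymptotically better than your $2^{-n}n^{-n/2}\,vol(B_n)$. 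As written, your argument proves only $\mathcal{O}\bigl(\frac{n^{3}\ln n\,\Delta}{\det(A^*)}\bigr)$ plus a $\frac{n^{2}\Delta}{\det(A^*)}\ln\frac{1}{\det(A^*)}$ term, not the stated theorem; your closing sentence is a hope, not an argument.

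You should also know that your instinct about where the difficulty sits is sharper than you may realize: the simplex repair does \emph{not} remove the logarithm. With $vol(S_u)\ge\det(A^*)/n!$ and the correct ball volume $vol(B_n)=\pi^{n/2}/(n/2)!$ (even $n$), one gets $\ln\frac{vol(B_n)}{2\,vol(S_u)}=\frac{n}{2}\ln n+\mathcal{O}(n)+\ln\frac{1}{\det(A^*)}$, i.e.\ the same $\frac{n}{2}\ln n$ you obtained from the trace bound. The paper reaches the clean $\mathcal{O}\bigl(\frac{n^{3}\Delta}{\det(A^*)}\bigr)$ only because it uses $vol(B_n)=\pi^{n}/n!$, which is not the volume of the unit ball (for $n=10$ it gives $\approx 0.026$ against the true $\pi^{5}/5!\approx 2.55$); it is that spurious $n!$ which cancels against the $n!$ of the simplex and leaves only $n\ln\pi$ on the left-hand side. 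Rerun with the correct formula, the paper's own proof yields precisely your log-laden bound, consistent with Bonifas et al., whose published result carries a logarithmic factor. So the concrete fix for your proposal is the simplex bound; the residual $\ln n$ is not a defect of your ellipsoid estimate alone but survives the paper's sharper cone estimate as well, and the clean statement of Theorem \ref{main} is not actually established by the paper's proof either.
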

\begin{proof}
We assume that the breadth-first-method starts from vertex
$v$. For $j \ge 1$ and $vol(S_{I_{j-1}}) \le \frac{1}{2} 
\cdot vol(B_n)$, using Lemma \ref{lemma3}, we have
\begin{eqnarray}
vol(S_{I_j}) & \ge & \left( 1+\sqrt{\frac{2}{\pi}} 
\frac{\det(A^*)}{n^2 \Delta } \right) \cdot  vol(S_{I_{j-1}})
\nonumber \\
 & \ge & \left( 1+\sqrt{\frac{2}{\pi}} 
\frac{\det(A^*)}{n^2 \Delta } \right)^{j} \cdot vol(S_{I_0}),
\end{eqnarray}
where $S_{I_0} = S_v$ includes a simplex $J_n$ 
spanned by $n+1$ vertices composed of $0$ and $n$ row 
vectors of $A_v$ (see Figure \ref{Lemma2}).
Since the volume of $J_n$ is given by \cite{stein66}
\[
vol(J_n)=\frac{\det(A_v)}{n!} \ge \frac{\det(A^*)}{n!},
\]
we have
\begin{equation}
vol(S_{I_0}) \ge vol(J_n) \ge \frac{\det(A^*)}{n!}.
\label{volumeS}
\end{equation}
Assuming $n$ is even (which is easy to derive the result
but the order of the estimation remains the same for odd 
$n$), we have
\begin{equation}
vol(B_n)= \frac{\pi^{\frac{n}{2}}}{\left(\frac{n}{2}\right)!}.
\end{equation}
The condition $vol(S_{I_j}) \le \frac{1}{2} \cdot vol(B_n)$
implies
\[
\frac{1}{2} \cdot vol(B_n) 
= \frac{1}{2}  \frac{\pi^{\frac{n}{2}}}{\left(\frac{n}{2}\right)!}
 \ge vol(S_{I_j}) 
%\nonumber \\
%& \ge & \left( 1+\sqrt{\frac{2}{\pi}} 
%\frac{\det(A^*)}{n^2 \Delta } \right) \cdot  vol(I_{j-1})
%\nonumber \\
% & \ge & 
 \ge \left( 1+\sqrt{\frac{2}{\pi}} 
\frac{\det(A^*)}{n^2 \Delta } \right)^{j} 
\frac{\det(A^*)}{n!},
\]
or
\begin{eqnarray}
\pi^{\frac{n}{2}} \ge 
2\frac{\left(\frac{n}{2}\right)!}{n!}\det(A^*) 
\left( 1+\sqrt{\frac{2}{\pi}} 
\frac{\det(A^*)}{n^2 \Delta } \right)^{j} 
\label{bbasic}
\end{eqnarray}
For $0 \le c \le 1$, it has $\ln (1+c) \ge c/2$. Therefore,
we can rewrite (\ref{bbasic}) as
\begin{eqnarray}
\frac{n}{2} \ln \pi &  \ge & 
\ln \left(2\frac{\left(\frac{n}{2}\right)!}{n!}\det(A^*) \right)
 + j \ln \left( 1+\sqrt{\frac{2}{\pi}} 
\frac{\det(A^*)}{n^2 \Delta } \right)
\nonumber \\
&  \ge &   \ln \left( \frac{1}{n^{n/2}}  \right)
+\ln (2\det(A^*) ) 
+j  \sqrt{\frac{1}{2\pi}} 
\frac{\det(A^*)}{n^2 \Delta }.
\end{eqnarray}
Therefore, we have
\[
\frac{n}{2} \ln (n\pi) \ge \ln (2\det(A^*) ) 
+j  \sqrt{\frac{1}{2\pi}} \frac{\det(A^*)}{n^2 \Delta }.
\]
This shows 
$j= \mathcal{O} \left( \frac{n^3 \Delta \ln (n\pi)}{\det(A^*)} \right)$
if $\det(A^*) \ge \frac{1}{2}$. For $\det(A^*) < \frac{1}{2}$,
\begin{eqnarray}
& & \frac{n}{2} \ln (n\pi) \ge \frac{n}{2}  \ln (2\det(A^*) ) 
+j  \sqrt{\frac{1}{2\pi}} \frac{\det(A^*)}{n^2 \Delta }
\nonumber \\
& \Longrightarrow &
\frac{n}{2} \ln  \left( \frac{n\pi}{2\det(A^*)}  \right)
\ge j  \sqrt{\frac{1}{2\pi}} \frac{\det(A^*)}{n^2 \Delta },
\end{eqnarray}
this shows 
$j= \mathcal{O} \left( \frac{n^3 \Delta }{\det(A^*)} 
\ln  \left( \frac{n}{\det(A^*)} \right) \right)$.
\hfill \qed
\end{proof}

\begin{remark}
The upper bound in Theorem \ref{main} is not only related 
to $n$, like the ones of 
%Kalai-Kleitman, Sukegawa, and Todd 
\cite{kk92,sukegawa17,todd14}, but also
to the condition numbers of the vertices of $A_v$.
If the rays of all $S_v$ are almost perpendicular, then
$\det(A^*)$ will be close to one. Otherwise, if for some
$v$, the rays of $S_v$ are almost linear dependent, then
$\det(A^*)$ will be close to zero, and the diameter 
bound of the polytope given in Theorem \ref{main}
will increase significantly. Therefore,
$\det(A^*)$ can be viewed as the condition number of
the polytope. 
\end{remark}

%If $A \in \Z^{m\times n}$ (noticing that if the rows of $A$ 
%are not normalized, the proof and the result of Theorem 
%\ref{main} are still valid), since the absolute value of
%any non-zero sub-determinant of $A$ is a positive
%integer, we have
%\begin{corollary}
%Let $P=\{ x\in \R^{n}: Ax \le b \}$ be a  
%polytope with $A \in \Z^{ m \times n}$ and $m>n \ge 2$. 
%Assume that all $(n-1) \times (n-1)$ sub-determinants of
%$A$ are bounded above by $\Delta$. Then, the diameter of 
%the polytope $P$ is bounded by
%$\mathcal{O} \left( {n^3 \ln (n) \Delta} \right)$.
%\label{corollary}
%\end{corollary}
%
%\begin{remark}
%The bound $\mathcal{O} \left( {n^3 \ln (n) \Delta} \right)$ 
%of the corollary is an improvement of the bound 
%$\mathcal{O} \left( {n^{3.5} \Delta^2 \ln (n\Delta) } \right)$
%of \cite{bsehn14}.
%\end{remark}

We conclude this short note by examining a high dimensional
cubic polytope.
\begin{example}
\begin{eqnarray}
\left[ 
\begin{array}{cccccc}
1 & 0 & 0 & \ldots & 0 & 0 \\
0 & 1 & 0 & \ldots & 0 & 0 \\
0 & 0  & 1 & \ldots & 0 & 0 \\
\vdots &  \vdots & \vdots &  \ddots & 0 & 0 \\
0 &  0 & 0 & \ldots  & 1 & 0 \\
0 &  0 & 0 & \ldots &  0 & 1 \\
-1 & 0 & 0 & \ldots & 0 & 0 \\
0 & -1 & 0 & \ldots & 0 & 0 \\
0 & 0  & -1 & \ldots & 0 & 0 \\
\vdots &  \vdots & \vdots &  \ddots & 0 & 0 \\
0 &  0 & 0 & \ldots  & -1 & 0 \\
0 &  0 & 0 & \ldots &  0 & -1 
\end{array}
\right]
\left[ \begin{array}{c}
x_1 \\ x_2 \\ \vdots \\  \vdots \\ x_{m-1} \\ x_m
\end{array} \right] 
\le 
\left[ \begin{array}{c}
1 \\ 1 \\ \vdots \\  \vdots \\ 1 \\ 1 \\
0 \\ 0 \\ \vdots \\  \vdots \\ 0 \\ 0
\end{array} \right] .
\label{1stProblem}
\end{eqnarray} 
Clearly, all rows are normalized and the absolute value 
of any sub-determinant of $A$ is either $0$ or $1$.
Applying Theorem 10 of \cite{bsehn14} gives an
upper bound of $\mathcal{O} \left( n^{3.5}  \ln(n) \right)$;
applying Theorem \ref{main} gives an
upper bound of $\mathcal{O} \left( n^{3}  \ln(n) \right)$.
Therefore, for this problem, the new bound proposed in this note
is better than the one in Theorem 10 of \cite{bsehn14}.
However, this example does not imply that the derived 
bound is better than Bonifas et al’s 	in general.
\end{example}

\section{acknowledgment}
This author thanks the anonymous reviewers for 
their valuable comments.

\section{Declarations of interest:} 
This research did not receive any specific grant from 
funding agencies in the public, commercial, or
not-for-profit sectors.

\end{document}